\title{Homotopy BV algebras in Poisson geometry}
\author{C.~Braun}
\address{Centre for Mathematical Science\\
City University London\\
Northampton Square\\
London EC1V 0HB\\UK}
\email{Christopher.Braun.1@city.ac.uk}
\author{A.~Lazarev}
\address{Department of
Mathematics and Statistics\\
Lancaster University\\
Lancaster LA1 4YF\\UK}
\email{a.lazarev@lancaster.ac.uk}
\thanks{This work was partially supported by EPSRC grants EP/J00877X/1 and EP/J008451/1.}
\thanks{The authors would like to thank the Isaac Newton Institute for hospitality during this work.}
\subjclass[2010]{14D15, 16E45, 53D17}
\keywords{$L_\infty$~algebra, BV~algebra, Poisson manifold, differential operator}
\theoremstyle{plain}
\newtheorem{theorem}{Theorem}[section]
\newcommand{\newautoreftheorem}[2]{
\newaliascnt{#1}{theorem}\newtheorem{#1}[#1]{#2}\aliascntresetthe{#1}%
\expandafter\def\csname #1autorefname\endcsname{#2}}
\theoremstyle{definition}
\numberwithin{equation}{section} % Number equations by section
\numberwithin{figure}{section}   % Number figures by section
\newcommand{\co}{\colon}
\newcommand{\ground}{\mathbf{k}}
\newcommand{\bvinf}{\mathrm{BV}_\infty}
\newcommand{\MC}{\mathrm{MC}}
\newcommand{\Der}{\mathrm{Der}}
\newcommand{\id}{\mathrm{id}}
\newcommand{\degree}[1]{\lvert #1 \rvert}
\DeclareMathOperator{\ad}{ad}
\DeclareMathOperator{\Ad}{Ad}
\DeclareMathOperator{\Hom}{Hom}
\DeclareMathOperator{\End}{End}
\DeclareMathOperator{\IHom}{\underline{Hom}}
\DeclareMathOperator{\CE}{CE}
\begin{document}
\def\sectionautorefname{Section}

\begin{abstract}
We define and study the degeneration property for $\bvinf$~algebras and show that it implies that the underlying $L_\infty$~algebras are homotopy abelian. The proof is based on a generalisation of the well-known identity $\Delta (e^{\xi})=e^{\xi}\left (\Delta(\xi)+\frac{1}{2}[\xi,\xi]\right )$ which holds in all BV algebras. As an application we show that the higher Koszul brackets on the cohomology of a manifold supplied with a generalised Poisson structure all vanish.
\end{abstract}

\maketitle
\tableofcontents

\section{Introduction}
A Batalin--Vilkovisky (BV) algebra is a graded commutative algebra supplied with an odd differential operator of order two and square zero. It appears in various contexts of algebraic topology, differential geometry and mathematical physics. As is usual for most algebraic structures, there is a notion of a \emph{homotopy}, or infinity, BV algebra structure encoding higher invariants of BV algebras. The general treatment of homotopy BV algebras is contained in~\cite{GalvezTonksVallette12:BV}, however for us the term $\bvinf$~algebra has a more restricted meaning, essentially equivalent to the definition in~\cite{kravchenko2000:deformationsofbvalgebras}.

An important special class of differential graded (dg) BV algebras is formed by imposing the \emph{degeneration property}, introduced in \cite{katzarkovkontsevichpantev2008:hodgetheoretic} and \cite{terilla2008:smoothnessbv}. This property holds in e.g.~the de Rham algebra of a symplectic manifold or the Dolbeault algebra of a Calabi--Yau manifold and in favourable cases it leads to a construction of a formal Frobenius manifold~\cite{BarannikovKontsevich98:Frobenius,Merkulov98:Frobenius}.

A dg BV algebra supports the structure of a dg Lie algebra, whereas a $\bvinf$~algebra gives rise to a homotopy analogue of a Lie algebra, called an $L_\infty$~algebra. The ordinary degeneration property for a dg BV algebra implies that its underlying dg Lie algebra is \emph{homotopy abelian}, i.e.~that it is quasi-isomorphic to an abelian Lie algebra. We will prove a generalisation of this statement for $\bvinf$~algebras. The proof is based on a generalisation of the following well-known identity for ordinary dg BV algebras:
\[
\Delta(e^\xi) = e^\xi\left ( \Delta(\xi) + \frac{1}{2}[\xi,\xi] \right )
\]
This generalisation holds, essentially, for all operators $\Delta$, not necessarily of second order.

Our main application of the degeneration property for $\bvinf$~algebras concerns the structure of the de Rham algebra of a manifold $M$ supplied with a generalised Poisson structure. The latter is just a multivector field on $M$ whose Schouten bracket with itself is zero. An ordinary Poisson structure (a bivector field) on $M$ gives rise to a Koszul bracket on $\Omega(M)$, the de Rham algebra of $M$, making the latter a dg Lie algebra (in fact, a dg BV algebra). It was shown in \cite{voronovkhudaverdian2008:higerpoissonbrackets} that a generalised Poisson structure on $M$ leads to an $L_\infty$~structure on $\Omega(M)$ (in fact, to a $\bvinf$~structure on $\Omega(M)$). We show that this $\bvinf$~algebra has the degeneration property. As a consequence the higher Koszul brackets on the cohomology of $M$ vanish.

\subsection{Notation and conventions}
Throughout this paper $\ground$ will denote a field of characteristic zero.

We choose to work in the supergraded context. More precisely, this means we work in the category of super vector spaces: $\mathbb{Z}/2\mathbb{Z}$--graded $\ground$--linear vector spaces, with morphisms linear maps preserving the grading. This is a symmetric monoidal category with symmetry isomorphism $s\co V\otimes W \to W\otimes V$ given by
\[
s(v\otimes w) = (-1)^{\degree{v}\degree{w}}w\otimes v.
\]
Denote by $\Pi\ground$ the one dimensional super vector space concentrated in odd degree. We denote by $\Pi V$ the functor $V\mapsto \Pi\ground \otimes V$, called \emph{parity reversion}. The space $\IHom(V,W)$ denotes the super vector space with even part the space of morphisms $V\to W$ (the linear maps which preserve the grading) and odd part the space of morphisms $V\to \Pi W$ (the linear maps which reverse the grading). This is an internal $\Hom$ functor making the category of super vector spaces into a closed symmetric monoidal category.

For brevity we will normally suppress the adjective `super'. In particular by a (unital) associative/commutative\footnote{Commutative algebras are always assumed to be associative.}/Lie algebra we will always mean the appropriate notions in this category. This means that commutativity, anti-commutativity and the Jacobi identity are to be understood in the graded sense, for example commutativity would mean
\[
ab = (-1)^{\degree{a}\degree{b}}ba.
\]
The abbreviation `dg' will stand for `differential (super)graded' and we will abbreviate the expression `commutative dg algebra' to `cdga'.

We note that most of our results also hold in the $\mathbb{Z}$--graded context, after making suitable cosmetic adaptations such as replacing parity reversion with suspension/desuspension as appropriate.

We will use the notion of a \emph{complete} (dg) vector space; this is just an inverse limit of finite-dimensional (dg) vector spaces. An example of a complete vector space is $V^*$, the $\ground$--linear dual to a discrete vector space~$V$. A complete vector space comes equipped with a topology and whenever we deal with a complete vector space all linear maps from or into it will be assumed to be continuous; thus we will always have $V^{**}\cong
V$. Similarly, we will always have $(V\otimes V)^*\cong V^*\otimes V^*$ since tensor products of complete vector spaces $C = \lim_\leftarrow C_i$ and $B=\lim_\leftarrow B_i$ will always be assumed to mean \emph{completed} tensor products, in other words $C\otimes B$ is the complete vector space $C\otimes B = \lim_\leftarrow C_i \otimes B_j$. If $V$ is a discrete vector space and $C = \lim_{\leftarrow} C_i$ is a complete vector space, the tensor product $C\otimes V$ will always be assumed to mean $\lim_\leftarrow C_i\otimes V$.

A \emph{complete algebra} is an algebra in complete spaces which, in addition, is also local. A prototypical example of a complete algebra is the completed symmetric algebra $\widehat{S}V$ on a complete vector space~$V$.

\section{\texorpdfstring{$L_\infty$}{L-infinity}~algebras}
This introductory section fixes the terminology and standard facts about $L_\infty$~algebras relevant to the present work. More detailed discussion can be found in, e.g.~\cite{ChuangLazarev:twisting}.

Let $V$ be a vector space; then its dual is a complete vector space and we can form its complete symmetric algebra $\widehat{S}V^*$. Let us denote by $\Der (\widehat{S}V^*)$ the Lie algebra of (continuous) derivations of $\widehat{S}V^*$. Choosing a basis $x_i,i\in I$ in $V^*$, any derivation $\xi\in\Der(\widehat{S}V^*)$ can be written as
 \[\xi=\sum_{i\in I}f_i^0\partial_{x_i}+\sum_{i\in I} f_i^1\partial_{x_i}+\dots+\sum_{i\in I} f_i^k\partial_{x_i}+\dots\]
where $f_i^k$ is a linear combination (perhaps infinite if the indexing set $I$ is infinite) of monomials in $x_n$s of order $k$. If $\xi=\sum_{i\in I} f_i^n\partial_{x_i}$ for a fixed $n$ then we say that $\xi$ is a derivation of order $n$; this notion clearly does not depend on the choice of the basis in $V^*$. The space of derivations of order $\geq n$ will be denoted by $\Der_{\geq n}(\widehat{S}V^*)$.

\begin{definition}
Let $V$ be a vector space. An $L_\infty$~structure on $V$ is an odd element $m\nobreak\in\nobreak\Der_{\geq 1}(\widehat{S}\Pi V^*)$ which satisfies the equation $m^2=m\circ m\nobreak=\nobreak 0$. The pair $(V,m)$ will be referred to as an $L_\infty$~algebra and the algebra $\widehat{S}\Pi V^*$, supplied with the differential $m$, as its representing complete cdga.
\end{definition}

There is a concomitant notion of an $L_\infty$~map.

\begin{definition}
Let $(V,m_V)$ and $(W,m_W)$ be two $L_\infty$~structures on $V$ and $W$. An $L_\infty$~map $f\co(V,m_V)\to(W,m_W)$ is a map between their representing complete cdgas $f\co\widehat{S}\Pi W^*\to \widehat{S}\Pi V^*$ such that $f\circ m_W=m_V\circ f$.
\end{definition}

A more traditional approach to defining $L_\infty$~algebras and maps is through multilinear maps. Note that a derivation $m\in\Der_{\geq 1}(\widehat{S}\Pi V^*)$ has the form $m=m_1^*+m_2^*+\dots$ where $m_n^*$ is a derivation of order $n$. In other words, any derivation is determined by the collection of maps $m_n^* \co \Pi V^*\to\left ((\Pi V^*)^{\otimes n}\right )_{S_n}$. We have an identification between $S_n$ coinvariants and $S_n$ invariants:
\[
i_n\co \left((\Pi V^*)^{\otimes n}\right)_{S_n}\to
\left((\Pi V^*)^{\otimes n}\right)^{S_n}\cong
\left((\Pi V^{\otimes n})_{S_n}\right)^*
\]
where $i_n(x_1\otimes \dots \otimes x_n)=\sum_{\sigma\in S_n}\sigma[x_1\otimes\dots\otimes x_n]$. Then the dual to the composite map
\[
i_n\circ m_n^*\co \Pi V^*\to \left((\Pi V^{\otimes n})_{S_n}\right)^*
\]
is a map $m_n \co(\Pi V^{\otimes n})_{S_n}\to\Pi V$. Thus an $L_\infty$~structure on $V$ is equivalent to a collection of symmetric multilinear maps $m_n\co (\Pi V)^{\otimes n}\to \Pi V$ of odd degree as above subject to appropriate conditions stemming from the equation $m\circ m=0$. For example, the linear component $m_1\co \Pi V \to \Pi V$ is a differential on $\Pi V$.

A similar argument shows that an $L_\infty$~map $f\co \widehat{S}\Pi W^*\to \widehat{S}\Pi V^*$ is equivalent to a collection of symmetric multilinear maps $f_n\co (\Pi V)^{\otimes n}\to \Pi W$ of even degree satisfying suitable identities.

We can now define the notion of an $L_\infty$~(quasi-)isomorphism.

\begin{definition}
 An $L_\infty$~map $f\co (V,m_V)\to (W,m_W)$ is an $L_\infty$~(quasi-)isomorphism if its linear component $f_1\co \Pi V \to \Pi W $ is a (quasi-)isomorphism where $\Pi V$ and $\Pi W$ are supplied with differentials the linear components of $m_V$ and $m_W$.
\end{definition}

\subsection{Minimal \texorpdfstring{$L_\infty$}{L-infinity}~algebras and homotopy abelian \texorpdfstring{$L_\infty$}{L-infinity}~algebras}
An important special class of $L_\infty$~algebras is formed by \emph{minimal} $L_\infty$~algebras; these are derivations $m\in\Der_{\geq 2}(\widehat{S}\Pi V^*)$. Minimal $L_\infty$~algebras have the striking property that any $L_\infty$~quasi-isomorphism between them must be an isomorphism. Any $L_\infty$~algebra is $L_\infty$~quasi-isomorphic to a minimal one (cf.~for example \cite[Lemma 4.9]{kontsevich2003:defquant}), called its \emph{minimal model}. Any two minimal models of a given $L_\infty$~algebra are (non-canonically) isomorphic.

An $L_\infty$~algebra is \emph{abelian} if all its higher brackets $m_n$ vanish for $n\geq 2$. It is \emph{homotopy abelian} if it is $L_\infty$~quasi-isomorphic to an abelian $L_\infty$~algebra. Clearly an $L_\infty$~algebra is homotopy abelian if and only if its minimal model $(V,m_V)$ has its $L_\infty$~structure $m_V$ identically vanishing: $m_V=0$.

\subsection{Maurer--Cartan elements in \texorpdfstring{$L_\infty$}{L-infinity}~algebras}
\Needspace*{4\baselineskip}

\begin{definition}
Let $m$ be an $L_\infty$~structure on a vector space $V$ and let $C$ be a complete cdga with maximal ideal $C_+$. Then an even element $\xi\in C_+\otimes \Pi V$ is \emph{Maurer--Cartan} if it satisfies the Maurer--Cartan equation
 \[ (d_C\otimes \id)(\xi) +
\sum_{i=1}^\infty
\frac{1}{i!}m^C_i(\xi,\dots,\xi) = 0\]
where $m_i^C$ is obtained by extending $m_i$ multilinearly in $C$. The set of Maurer--Cartan elements in $C_+\otimes \Pi V$ will be denoted by $\MC(V,C)$. The correspondence $(V,C)\mapsto \MC(V,C)$ is functorial in $C$.
\end{definition}

The significance of Maurer--Cartan elements stems from the following standard result.

\begin{proposition}\label{MCfunctor}
Let $(V,m)$ be an $L_\infty$~algebra and $C$ be a complete cdga. Then the functor $C\mapsto\MC(V,C)$ is represented by the complete cdga $(\widehat{S}\Pi V^*,m)$.
\end{proposition}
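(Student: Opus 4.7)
The plan is to exhibit a natural bijection between $\MC(V,C)$ and the set of cdga maps $\Hom_{\mathrm{cdga}}(\widehat{S}\Pi V^*,C)$ (where we of course restrict to continuous, local maps since both algebras are complete). I will construct this bijection in two stages: first forget differentials and match graded-commutative algebra maps with elements of $C_+\otimes \Pi V$, then cut down to the Maurer--Cartan locus by imposing compatibility with $d_C$ and $m$.

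For the first stage, a continuous local map $f\co \widehat{S}\Pi V^*\to C$ of complete graded commutative algebras is, by the universal property of the completed symmetric algebra, the same as a continuous grading-preserving linear map $\Pi V^*\to C_+$. Using the standing convention that $V^{**}\cong V$ for complete vector spaces and that the tensor product is completed, such a linear map is precisely an even element $\xi\in C_+\otimes \Pi V$; denote the corresponding algebra map by $f_\xi$. This step is formal and uses no structure beyond the closed symmetric monoidal structure on complete vector spaces.

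For the second stage, I need to show that $f_\xi$ commutes with the differentials $m$ and $d_C$ if and only if $\xi\in\MC(V,C)$. Since both $m$ and $d_C$ are derivations and $f_\xi$ is an algebra map, the condition $f_\xi\circ m=d_C\circ f_\xi$ only needs to be checked on the generators $\Pi V^*\subset \widehat{S}\Pi V^*$. Writing $m=\sum_{n\geq 1}m_n^*$ as in the derivation decomposition and unwinding the duality $i_n$ between $S_n$~coinvariants and $S_n$~invariants used earlier in the excerpt to define the brackets $m_n$, the composite $f_\xi\circ m_n^*$ evaluated on $x\in\Pi V^*$ pairs $x$ with $\frac{1}{n!}m_n^C(\xi,\dots,\xi)$, the $n$-fold symmetrised $C$-multilinear extension of $m_n$. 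Summing over $n$ and adding the $d_C$ contribution converts the equation $f_\xi\circ m=d_C\circ f_\xi$ on generators into
\[
\langle x,\,(d_C\otimes \id)(\xi)+\sum_{n\geq 1}\tfrac{1}{n!}m_n^C(\xi,\dots,\xi)\rangle=0\qquad\text{for all }x\in \Pi V^*,
\]
which is equivalent to the Maurer--Cartan equation because $\Pi V^*$ separates points of $C\otimes \Pi V$.

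The main obstacle is the bookkeeping in this second stage: one must carefully match the coefficient $1/n!$ appearing in the Maurer--Cartan equation with the combinatorial factor $n!$ produced by the coinvariants-to-invariants identification $i_n$ and the Koszul signs arising from evaluating $x\in \Pi V^*$ on a symmetric tensor of elements of $\Pi V$. Once the pairing conventions are pinned down, the equivalence is a direct computation. Finally, naturality in $C$ is immediate since the bijection $\xi\leftrightarrow f_\xi$ is manifestly functorial in continuous cdga maps $C\to C'$, which completes the proof that $(\widehat{S}\Pi V^*,m)$ represents $\MC(V,-)$.
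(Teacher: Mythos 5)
The paper states this proposition without proof, citing it as a standard result, so there is no in-paper argument to compare against. Your two-stage argument is the standard proof of this representability statement and is correct: continuous local algebra maps $\widehat{S}\Pi V^*\to C$ correspond to even elements of $C_+\otimes\Pi V$ by the universal property of the completed symmetric algebra (together with the standing conventions $V^{**}\cong V$ and completed tensor products), and since $f_\xi\circ m$ and $d_C\circ f_\xi$ are both $f_\xi$--derivations they agree if and only if they agree on the topological generators $\Pi V^*$, which unwinds to the Maurer--Cartan equation. You have also correctly located the only delicate point, namely that the $1/n!$ in the Maurer--Cartan equation cancels the factor $n!$ produced by the symmetrisation map $i_n$ identifying coinvariants with invariants; carrying out that pairing computation explicitly (with the Koszul signs) is all that remains to make the proof fully rigorous, and nothing in it can go wrong.
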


\begin{remark}\label{rem:Yoneda}
An $L_\infty$~map $V\to W$ gives rise, for any complete cdga $C$, to a map of sets $\MC(V,C)\to\MC(W,C)$ which is functorial in $C$, in other words a natural transformation. In fact such a natural transformation is, by Yoneda's lemma, equivalent to having an $L_\infty$~map $V\to W$. This observation is often useful for constructing $L_\infty$~maps.
\end{remark}

\section{\texorpdfstring{$\bvinf$}{BV-infinity}~algebras and differential operators}
For any associative algebra $A$, by a linear operator on $A$ we mean an element of the associative algebra $\End(A) = \IHom(A,A)$. Any element of $a$ is regarded as a linear operator on $A$ by $a(x) = ax$.

Let $A$ be a unital associative algebra with pronilpotent ideal $I$. Then for any $a\in I$ we define:
\[
e^{a} = 1 + a + \frac{a^2}{2!} + \frac{a^3}{3!} + \dots \in A
\]
Given any $b\in A$ such that $b - 1 \in I$ we define:
\[
\log{b} = (b-1) - \frac{(b-1)^2}{2} + \frac{(b-1)^3}{3} - \dots \in I
\]
For any $a\in I$ we have $\log{e^a} = a$ and for any $b\in A$ with $b-1\in I$ we have $e^{\log{b}} = b$.

Recall the following well-known identity, which holds for any $a\in I$ and any $D$ a continuous linear operator on $A$:
\[
\Ad(e^a)(D) = e^a D e^{-a} = e^{\ad(a)}(D)
\]
Rearranging, we obtain the following version of this identity, which will be important for us.
\begin{equation}\label{eq:exponentialidentity}
D e^{a} = e^{a}e^{-\ad(a)}(D)
\end{equation}

\subsection{Differential operators}
Recall that if $A$ is a unital commutative algebra then there is an increasing filtration $F_0\subset F_1 \subset \dots \subset \End(A)$ of $\End(A)$ where $F_n$ is the space of differential operators of order not higher than $n$.

More precisely, set $F_{-1} = \{0\}$ and define recursively
\[F_n = \{D\in \End(A) \co \forall a\in A, [D,a] \in F_{n-1}\}.\]
Unwrapping this recursive definition, $D$ is a differential operator of order not higher than $n$ if for any $a_1,\dots, a_{n+1}\in A$ it holds that
\[
[[[\dots[D,a_1]\dots],a_n],a_{n+1}] = 0.
\]

Given $D\in \End(A)$, for $n\geq 0$ define the linear maps $m_n\co A^{\otimes n} \to A$ by
\[
m_n(a_1,\dots, a_n) = [[[\dots[D,a_1]\dots],a_{n-1}],a_n](1).
\]
Then $D$ is a differential operator of order not higher than $n$ if and only if $m_{n+1} = 0$ if and only if $m_{n+i} = 0$ for all $i\geq 1$.

\begin{remark}
The terminology `$n$--th order differential operator' is a more elegant phrase, but technically speaking it could be ambiguous. We will use it to mean a differential operator of order not higher than $n$.
\end{remark}

\subsection{\texorpdfstring{$L_\infty$}{L-infinity}~algebras from linear operators}
Since $A$ is commutative the maps $m_n$ are symmetric. Given an operator $D$ then we obtain maps $m_n^*\co A^* \to \left ((A^*)^{\otimes n}\right )_{S_n}$ and hence a derivation $m\in\Der(\widehat{S}A^*)$. We have the following observation due originally to Kravchenko.

\begin{proposition}[{\cite[Propostion 2]{kravchenko2000:deformationsofbvalgebras}}]\label{prop:krav}
$D^2=0$ if and only if $m^2=0$.
\end{proposition}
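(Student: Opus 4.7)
The plan is to express both conditions in terms of a single generating function constructed from iterated commutators of $D$. For any complete local commutative algebra $R$ and any even element $\xi\in R_+\otimes A$, introduce the curvature
\[
\Phi(\xi)=\sum_{n\geq 0}\frac{1}{n!}m_n^R(\xi,\dots,\xi)\in R\otimes A,
\]
where each $m_n$ has been extended $R$-linearly. First I would recognise, via identity \eqref{eq:exponentialidentity} applied to the $R$-linear extension of $D$, the closed form
\[
\Phi(\xi)=e^{-\xi}D(e^\xi).
\]
This follows from \eqref{eq:exponentialidentity} in the form $De^\xi=e^\xi\,e^{-\ad(\xi)}(D)(1)$ together with the routine verification $\ad(\xi)^n(D)(1)=(-1)^n m_n(\xi,\dots,\xi)$.

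The heart of the proof is to obtain the analogous generating function for the (even) derivation $m\circ m$:
\[
\Psi(\xi):=\sum_{n\geq 0}\frac{1}{n!}(m^2)_n^R(\xi,\dots,\xi).
\]
Expanding $m\circ m$ as a coderivation composition gives
\[
(m^2)_n(\xi^n)=\sum_{\substack{j+k=n+1\\j\geq 1,\,k\geq 0}}\binom{n}{k}m_j\bigl(m_k(\xi^k),\xi^{j-1}\bigr),
\]
and summation over $n$ telescopes cleanly (since $\xi$ is even, all Koszul signs are trivial) into the directional derivative
\[
\Psi(\xi)=\sum_{j\geq 1}\frac{1}{(j-1)!}m_j^R\bigl(\Phi(\xi),\xi,\dots,\xi\bigr)=(d\Phi)_\xi\bigl(\Phi(\xi)\bigr).
\]
Differentiating the closed form $\Phi(\xi)=e^{-\xi}D(e^\xi)$ at $\xi$ in any even direction $\eta$ yields $(d\Phi)_\xi(\eta)=e^{-\xi}D(\eta e^\xi)-\eta\,\Phi(\xi)$, so setting $\eta=\Phi(\xi)$ and using $e^\xi\Phi(\xi)=D(e^\xi)$ produces
\[
\Psi(\xi)=e^{-\xi}D^2(e^\xi)-\Phi(\xi)^2.
\]
The decisive observation is that $\Phi(\xi)$ is an \emph{odd} element of the graded commutative algebra $R\otimes A$ (every $m_n$ is odd while $\xi$ is even), so $\Phi(\xi)^2=0$ in characteristic zero and we are left with $\Psi(\xi)=e^{-\xi}D^2(e^\xi)$.

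The proposition now follows. A continuous derivation of $\widehat{S}A^*$ is determined by its dual structure maps, equivalently by its generating function evaluated over all test algebras $R$; thus $m^2=0\iff\Psi\equiv 0\iff D^2(e^\xi)=0$ for every such $\xi$. Taking $R=\ground[[t]]$ with $t$ even and $\xi=ta$ for $a\in A$ even (respectively $R=\ground[\varepsilon]/(\varepsilon^2)$ with $\varepsilon$ odd and $\xi=\varepsilon a$ for $a\in A$ odd), and expanding the power series $D^2(e^\xi)$, one reads off $D^2(a)=0$ for every $a\in A$; the converse is trivial.

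The main obstacle is the telescoping step: verifying that the coderivation composition collapses exactly to $(d\Phi)_\xi(\Phi(\xi))$. This requires careful bookkeeping of the factorials $\binom{n}{k}/n!=1/(k!(j-1)!)$ across all $n$, permitting the $k=0$ case so that the curvature bracket $m_0=D(1)$ contributes through the zeroth term of $\Phi$, and a check that Koszul signs vanish because $\xi$ is even. Once the identity $\Psi(\xi)=e^{-\xi}D^2(e^\xi)$ is secured, the rest of the argument is essentially formal.
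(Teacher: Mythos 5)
Your argument is correct, but it is worth noting that the paper itself offers no computation here: it cites Kravchenko for the proof and instead, in the remark following \autoref{thm:homotopyabelian}, sketches a conceptual proof --- the substitution $\xi\mapsto e^{\xi}-1$ is an automorphism of $\widehat{S}A^*$ intertwining the derivation $m$ with the first-order derivation dual to $D$, so one squares to zero exactly when the other does. Your generating-function identity $\Psi(\xi)=e^{-\xi}D^2(e^{\xi})$ is precisely the ``functor of points'' unpacking of that automorphism (both rest on \autoref{eq:exponentialidentity}), so the two routes are the same idea at different levels of explicitness: yours makes the telescoping of the coderivation composition visible and keeps track of where the curvature term $m_0=D(1)$ enters, while the paper's version avoids all bookkeeping by working with the algebra map directly. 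Two small points of hygiene in your write-up. First, the claim that ``all Koszul signs are trivial because $\xi$ is even'' is too quick: $D$ and $\Phi(\xi)$ are odd, and the correct identity is $\sum_j\frac{1}{(j-1)!}m_j(\eta,\xi^{j-1})=[e^{-\operatorname{ad}\xi}D,\eta](1)=e^{-\xi}D(\eta e^{\xi})-(-1)^{\degree{\eta}}\eta\,\Phi(\xi)$, so for the odd direction $\eta=\Phi(\xi)$ your correction term should appear with the opposite sign; this is harmless only because $\Phi(\xi)^2=0$, but you should not advertise the formula as valid ``in any even direction'' and then apply it to an odd one. Second, in the final polarization step you should also record that the constant term gives $D^2(1)=0$ and that mixed terms $D^2(a_1\cdots a_k)$ are recovered by taking several formal parameters of appropriate parities; as stated you only read off $D^2(a)$ on a single element, which suffices for $D^2=0$ as an operator but deserves a sentence.
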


We now have the following corollary, generalising in a certain way the construction of the Lie bracket associated to a BV operator. This can also be realised as an instance of Voronov's construction of higher derived brackets \cite{voronov2005:higherderivedbrackets,voronov2005:higherderivedbracketsarbitrary}.

\begin{corollary}\label{cor:linftyconstruction}
Let $A$ be a unital commutative algebra and let $D$ be a linear operator of odd degree such that $D^2=0$ and $D(1)=0$. Then $m\in\Der(\widehat{S}A^*)$ defines an $L_\infty$~structure on $\Pi A$.\qed
\end{corollary}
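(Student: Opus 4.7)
The plan is to verify the three defining conditions of an $L_\infty$~structure on $\Pi A$, namely that $m$ is an odd element of $\Der_{\geq 1}(\widehat{S}\Pi(\Pi A)^*)$ satisfying $m\circ m=0$. Since $\Pi^2\cong\id$ canonically, the representing algebra $\widehat{S}\Pi(\Pi A)^*$ is identified with $\widehat{S}A^*$, so the derivation $m\in\Der(\widehat{S}A^*)$ produced from $D$ already lives in the correct space.

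First I would observe that the hypothesis $D(1)=0$ kills the constant term $m_0=D(1)$ of the derivation, placing $m$ in $\Der_{\geq 1}(\widehat{S}A^*)$. Second, the parity of $m$ matches that of $D$: since $D$ is odd and the nested commutator $[[\dots[D,a_1]\dots],a_n]$ has operator parity $1+\degree{a_1}+\dots+\degree{a_n}$, the induced maps $m_n\co A^{\otimes n}\to A$ are odd in the appropriate graded sense and yield an odd derivation on the representing algebra after dualization. The graded commutativity of $A$, already noted in the text just before the statement, is what permits the $m_n$ to descend to $S_n$-coinvariants and hence to dualize to a genuine derivation of the completed symmetric algebra.

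The essential content of the corollary, the equation $m\circ m=0$, is then handed to us directly by \autoref{prop:krav} from the hypothesis $D^2=0$. There is no serious obstacle; the only point demanding any care is the parity-reversion bookkeeping, reconciling the fact that $m$ is viewed as a derivation on $\widehat{S}A^*$ while encoding an $L_\infty$~structure on $\Pi A$ rather than on $A$.
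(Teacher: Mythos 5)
Your proposal is correct and is essentially the argument the paper intends: the corollary is stated with no written proof precisely because $m\circ m=0$ is immediate from \autoref{prop:krav}, while $D(1)=0$ removes the order-zero component so that $m\in\Der_{\geq 1}$, and the identification $\widehat{S}\Pi(\Pi A)^*\cong\widehat{S}A^*$ handles the parity bookkeeping exactly as you describe.
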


\begin{remark}
If $D(1)\neq 0$ then $m$ will in fact define a \emph{curved} $L_\infty$~algebra with curvature $D(1)$. Many of the results we state will also hold after adding the adjective `curved' at appropriate places. However, since we will not make use of curved $L_\infty$~algebras we will not consider this, for the sake of simplicity.
\end{remark}

This $L_\infty$~structure is always homotopy abelian, which we shall now show. We first prove the following straightforward lemma.

\begin{lemma}\label{lem:expmc}
Let $A$ be a unital commutative algebra with a linear operator of odd degree with $D^2 =0$ and $D(1)=0$ giving rise to an $L_\infty$~structure on $\Pi A$ and let $C$ be a complete cdga with maximal ideal $C_+$. Then an even element $\xi \in C_+ \otimes A$ satisfies the Maurer--Cartan equation if and only if
\[
e^{-\ad(\xi)}(d_C + D)(1) = 0.
\]
\end{lemma}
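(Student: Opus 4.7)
The plan is to expand $e^{-\ad(\xi)}(d_C + D)(1)$ as a power series in nested super-commutators and match it term-by-term against the Maurer--Cartan expression. By linearity of $\ad$, I split
\[
e^{-\ad(\xi)}(d_C + D)(1) = e^{-\ad(\xi)}(d_C)(1) + e^{-\ad(\xi)}(D)(1),
\]
observing that the $n=0$ contributions vanish because $(d_C + D)(1) = 0$. It then suffices to show that these two sums reproduce, respectively, the linear and the nonlinear part of the Maurer--Cartan equation.

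For the $d_C$ term, I would use that $d_C\otimes\id$ is a derivation of the graded commutative algebra $C\otimes A$, so by the Leibniz rule $[d_C,L_\xi] = L_{d_C(\xi)}$, where $L_a$ denotes left multiplication by $a$. Since left multiplications commute in the graded sense ($[L_a,L_b] = L_{[a,b]} = 0$ by graded commutativity), all further nested brackets vanish: $(-\ad(\xi))^n(d_C) = 0$ for every $n\geq 2$. Applying everything to $1$ and using $d_C(1)=0$ gives $e^{-\ad(\xi)}(d_C)(1) = d_C(\xi)$, which is exactly the linear term of the Maurer--Cartan equation.

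For the $D$ term, since $\xi$ is even and $D$ is odd, the super-commutator identity yields $-\ad(\xi)(D) = [D,\xi]$, and an easy induction gives $(-\ad(\xi))^n(D) = [[\ldots[[D,\xi],\xi],\ldots],\xi]$ with $n$ nested commutators. By the very definition of the symmetric multilinear maps $m_n\co A^{\otimes n}\to A$ associated to $D$, and their multilinear extension $m_n^C$ to $C\otimes A$, this collapses to $(-\ad(\xi))^n(D)(1) = m_n^C(\xi,\ldots,\xi)$ for $n\geq 1$; combined with $D(1)=0$ this gives $e^{-\ad(\xi)}(D)(1) = \sum_{n\geq 1}\frac{1}{n!}m_n^C(\xi,\ldots,\xi)$. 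Summing the two contributions reproduces the Maurer--Cartan equation exactly, proving both directions simultaneously. The only genuine obstacle is the usual pedantic sign bookkeeping in verifying that the iterated adjoints collapse to the stated combinatorial expression, which however is clean because $\xi$ has even degree and no awkward Koszul signs appear.
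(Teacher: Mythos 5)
Your proposal is correct and follows essentially the same route as the paper: both rest on the identity $(-\ad(\xi))^n(D)(1)=m_n^C(\xi,\dots,\xi)$ together with the observation that $d_C$ is a first-order operator, so only its linear term survives. You simply spell out the two pieces of the expansion in more detail than the paper does.
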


\begin{proof}
Note that since $\xi \in C_+ \otimes A$ the left hand side converges. The operations $m_n^C\co (C \otimes A)^{\otimes n}\to C\otimes A$ satisfy:
\[
m_n^C(\xi,\dots,\xi) = (-\ad(\xi))^n(D)(1)
\]
Therefore, unwrapping this formula explicitly and remembering that $d_C$ is in particular a first order differential operator we obtain:
\[
e^{-\ad(\xi)}(d_C + D)(1) = \sum_{i=0} \frac{1}{i!}(-\ad(\xi))^i (d_C+D)(1) = (d_C\otimes \id)(\xi) + \sum_{i=1} \frac{1}{i!}m_i^C(\xi,\dots,\xi)
\]
In other words, this equation is simply the Maurer--Cartan equation.
\end{proof}

\begin{remark}
The proof of \autoref{lem:expmc} taken together with \autoref{eq:exponentialidentity} in fact generalises to the $L_\infty$~context the standard identity for a BV algebra with second order BV operator $D = \Delta$:
\[
\Delta (e^\xi) = e^\xi\left(\Delta(\xi) + \frac{1}{2}[\xi,\xi]\right)
\]
Indeed, unwrapping \autoref{eq:exponentialidentity} in terms of the higher $L_\infty$~operations we obtain (under appropriate continuity and convergence conditions) for any operator $D$ with $D(1)=0$:
\[
D(e^\xi) = e^\xi \left( D(\xi) + \frac{1}{2!}m_2(\xi,\xi) + \frac{1}{3!}m_3(\xi,\xi,\xi) + \dots \right )
\]
\end{remark}

\begin{theorem}\label{thm:homotopyabelian}
Let $A$ be a unital commutative algebra with a linear operator of odd degree with $D^2 =0$ and $D(1)=0$ giving rise to an $L_\infty$~structure on $\Pi A$. Then this $L_\infty$~algebra is homotopy abelian.
\end{theorem}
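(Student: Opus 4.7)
The plan is to exhibit an $L_\infty$ isomorphism from the abelian $L_\infty$ algebra on $\Pi A$ (the one whose only nonzero operation is the differential $m_1$ induced by $D$) onto $(\Pi A,m)$ by producing a natural bijection between their Maurer--Cartan functors and invoking \autoref{MCfunctor} together with \autoref{rem:Yoneda}.

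The key starting observation is a clean rewriting of the Maurer--Cartan condition. The operator $d_C+D$ acts on $C\otimes A$, and since the ideal $C_+\otimes A$ is pronilpotent, \autoref{eq:exponentialidentity} applies with $a=\xi$ and $d_C+D$ in place of $D$, yielding
\[
(d_C+D)(e^\xi) \;=\; e^\xi\cdot e^{-\ad(\xi)}(d_C+D)(1).
\]
Because $e^\xi$ is a unit in $C\otimes A$, combining this with \autoref{lem:expmc} shows that $\xi\in C_+\otimes A$ is Maurer--Cartan for $(\Pi A,m)$ if and only if $(d_C+D)(e^\xi)=0$.

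Let $(\Pi A,m^{\mathrm{ab}})$ denote the abelian $L_\infty$ algebra with only nonzero operation $m_1^{\mathrm{ab}}=D$. Its Maurer--Cartan elements in~$C$ are precisely the $(d_C+D)$--closed even elements of $C_+\otimes A$. The assignments $\xi\mapsto e^\xi-1$ and $\eta\mapsto\log(1+\eta)$ are then mutually inverse bijections between $\MC(\Pi A,m)(C)$ and $\MC(\Pi A,m^{\mathrm{ab}})(C)$, and they are manifestly natural in $C$ since exp and log are defined by power series that commute with morphisms of complete cdgas.

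By \autoref{MCfunctor} the two MC functors are represented by the respective representing complete cdgas, so via Yoneda this natural isomorphism is induced by a (strict) isomorphism of representing cdgas, i.e.\ by an $L_\infty$ isomorphism $(\Pi A,m^{\mathrm{ab}})\to(\Pi A,m)$. Since $m^{\mathrm{ab}}$ is abelian, $(\Pi A,m)$ is homotopy abelian. The only potentially delicate point is recognising that a natural \emph{isomorphism} (rather than merely a natural transformation) of MC functors yields a genuine $L_\infty$ isomorphism, but this is immediate from full faithfulness of the Yoneda embedding applied to the representing cdgas.
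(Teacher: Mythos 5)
Your proposal is correct and follows essentially the same route as the paper: apply \autoref{eq:exponentialidentity} to $d_C+D$, use \autoref{lem:expmc} to identify Maurer--Cartan elements with $\xi$ such that $e^\xi-1$ is a $(d_C+D)$--cycle, and conclude via \autoref{MCfunctor} and Yoneda that $\xi\mapsto e^\xi-1$ induces an $L_\infty$~isomorphism with the abelian structure. The paper's proof is identical in substance, differing only in that it phrases the identity in terms of $e^\xi-1$ from the outset (using $d_C(1)=D(1)=0$).
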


\begin{proof}
Let $C$ be a complete cdga with maximal ideal $C_+$. Since $d_C(1) = D(1)=0$, together with \autoref{eq:exponentialidentity} we have for $\xi \in C_+ \otimes A$:
\[
(d_C + D) (e^{\xi}-1) = e^{\xi}\left (e^{-\ad(\xi)}(d_C + D)(1)\right )
\]
Therefore, for $\xi$ of even degree, by \autoref{lem:expmc} $e^\xi - 1$ is a $(d_C + D)$--cycle if and only $\xi$ is Maurer--Cartan. So the invertible map $\xi \mapsto e^\xi - 1$ gives rise to a natural isomorphism of Maurer--Cartan sets
\[\MC(\Pi A, C)\cong \MC(\mathfrak{h},C)\]
where $\mathfrak{h}$ is the abelian $L_\infty$~algebra with underlying space $\Pi A$, differential $m_1 = D$ and $m_n=0$ for all $n\geq 2$. It follows from \autoref{MCfunctor} that $\Pi A$ and $\mathfrak{h}$ are $L_\infty$~isomorphic.
\end{proof}

\begin{remark}
The proof of \autoref{thm:homotopyabelian} simply constructs an automorphism of commutative algebras $\widehat{S}A^* \to \widehat{S}A^*$, commuting with two differential operators on $\widehat{S}A^*$. In particular, this construction does not depend in an essential way on these differential operators being of square zero, however if one of them squares to zero then so must the other. Since one operator is the derivation $m$ associated to $D$ and the other is just $D^*$, we could obtain in this way a simple, conceptual and non-computational proof of \autoref{prop:krav}. Indeed, the $L_\infty$~algebra obtained in this way is merely a gauge transformation of the abelian $L_\infty$~algebra with differential $D$.
\end{remark}

\subsection{Commutative \texorpdfstring{$\bvinf$}{BV-infinity}~algebras}
First we recall the definition of a dg BV algebra.

\begin{definition}
A dg BV algebra is a unital commutative algebra $A$ with odd operators $d,\Delta$ with the following properties:
\begin{itemize}
\item $d^2 = \Delta^2 = d\Delta + \Delta d = 0$
\item $d(1) = \Delta(1) = 0$
\item The operator $d$ is a first order differential operator and the operator $\Delta$ is a second order differential operator.
\end{itemize}
\end{definition}

\begin{remark}
Note that the first two conditions are equivalent to saying that $D = d + h\Delta\in \End(A)[[h]]$ satisfies $D^2 = 0 $ and $D(1)=0$.
\end{remark}

We make the following definition, which is a slight modification of the original definition proposed by \cite{kravchenko2000:deformationsofbvalgebras}.

\begin{definition}
A \emph{(commutative) $\bvinf$~algebra} is a unital commutative algebra $A$ with odd operators $D_0, D_1, D_2,\dots$ with the following properties:
\begin{itemize}
\item $D = D_0 + hD_1 + h^2D_2 + \dots\in \End(A)[[h]]$ satisfies $D^2=0$ and $D(1)=0$.
\item Each $D_i$ is an $(i+1)$--th order differential operator.
\end{itemize}
\end{definition}

\begin{example}
One of the simplest examples of a BV algebra is the Chevalley--Eilenberg \emph{homological} complex of a Lie algebra. Let $V$ be a Lie algebra and denote by $\CE_\bullet(V)$ the complex for which $\CE_n(V)=\Lambda^n(V)$ and the differential $\Delta$ is given by the standard formula, cf.~for example \cite[Section 10.1]{loday1992:cyclichombook}. Then $\Delta$ is not a derivation of the wedge product on $\CE_\bullet(V)$, but rather an operator of order two. Furthermore, if $V$ itself has a differential then $\CE_\bullet(V)$ becomes a dg BV algebra.

Now let $V$ be an $L_\infty$~algebra and $(\widehat{S}\Pi V^*,m)$ be its representing complete cdga. In this case $\CE_\bullet(V)$ is the symmetric (but not completed) algebra $S\Pi V$ and the differential $\Delta$ is simply the dual to $m$ under the duality isomorphism $(\widehat{S}\Pi V^*)^*\cong S\Pi V$. Note that if $m=\sum_{i=1}^\infty m_i$ then $\Delta=\sum_{i=1}^\infty\Delta_i$ where $\Delta_i$ is the operator dual to $m_i$. Note that the operator $m_i$ is a sum of operators which are compositions of a derivation of order zero followed by an operator of multiplication with a monomial of order $i$. It follows that its dual $\Delta_i$ is a differential operator of order $i$ and since $\Delta^2=0$ we conclude that $(\CE_\bullet(V),\Delta_1, \Delta_2, \dots)$  is a $\bvinf$~algebra.
\end{example}

The following generalisation of the degeneration property for dg BV algebras will be a crucial property for us.

\begin{definition}
A $\bvinf$~algebra is said to have the \emph{degeneration property} if for every $N$ the homology of $A[h]/(h^N)$ with respect to the differential induced by $D$ is a free $\ground[h]/(h^N)$--module.
\end{definition}

\begin{remark}
The degeneration property clearly depends only on the properties of the operator $D = D_0 + hD_1 + h^2D_2 + \dots\in \End(A)[[h]]$, but not on the commutative algebra structure of $A$. It is easy to see that the degeneration property is equivalent  to the collapsing at the $E_1$ term of the spectral sequence associated with the filtration of $A[[h]]$ by the powers of $h$. It is further equivalent to the existence of so-called Hodge to de Rham degeneration data on the multicomplex $(A,D_0,D_1,\dots)$, see \cite{dotsenkoshadrinvallette:derhamhomotopyfrob} concerning this terminology. Yet another way to formulate the degeneration property is to require that the differential $D$ is a \emph{trivial} formal deformation of the differential $D_0$.
\end{remark}

Given a $\bvinf$~algebra $A$, then by \autoref{cor:linftyconstruction} there is an associated $L_\infty$~structure $ m_1 = D , m_2 , m_3 , \dots$ on $\Pi A[[h]]$ arising from the operator $D$. By \autoref{thm:homotopyabelian} this $L_\infty$~algebra is homotopy abelian.

However, since $m_n|_{h=0}=0$ for all $n\geq 2$, this $L_\infty$ structure is not quite the correct generalisation of the Lie algebra associated to a dg BV algebra. We instead wish to consider the $L_\infty$~structure $D, m_2/h, m_3/h^2, \dots$. At first glance this may not appear to define an $L_\infty$~structure on $\Pi A[[h]]$, however it turns out to do so.

\begin{proposition}\label{prop:linftyrescale}
Let $A$ be a $\bvinf$~algebra and let $m_1, m_2, m_3, \dots$ be the associated $L_\infty$~structure on $\Pi A[[h]]$. Then the sequence of maps $m_1, m_2/h, m_3/h^2, \dots$ also defines an $L_\infty$~structure of $\Pi A[[h]]$.
\end{proposition}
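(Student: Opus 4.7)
The plan is to identify the divisibility properties of $m_n$ in the formal variable $h$ that come from the differential operator orders, and then to verify that rescaling by $h^{n-1}$ is compatible with the $L_\infty$ relations because each relation is already divisible by $h^{n-1}$.

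First I would analyse the structure of $m_n$ as a $\ground[[h]]$-multilinear map. By the explicit formula from the previous subsection,
\[
m_n(a_1,\dots,a_n) = [[\dots[D,a_1]\dots],a_{n-1}],a_n](1).
\]
Expanding $D = D_0 + hD_1 + h^2 D_2 + \dots$ and using that commutators with elements of $A$ are $\ground[[h]]$-linear, we get
\[
m_n(a_1,\dots,a_n) = \sum_{k\geq 0} h^k [[\dots[D_k,a_1]\dots],a_{n-1}],a_n](1).
\]
Since $D_k$ is a differential operator of order at most $k+1$, the iterated commutator vanishes whenever $n > k+1$, i.e.\ whenever $k < n-1$. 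Therefore $m_n$ is divisible by $h^{n-1}$ on the generators, and, since it is $\ground[[h]]$-multilinear, on all of $\Pi A[[h]]$. Hence $\tilde m_n := m_n/h^{n-1}$ is a well-defined symmetric $\ground[[h]]$-multilinear map of the correct degree.

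Next I would check the $L_\infty$ relations for the rescaled maps. The $n$-th $L_\infty$ relation for $(m_i)$ is a symmetrised sum
\[
\sum_{i+j = n+1} \sum_{\sigma} \pm\, m_i\bigl(m_j(x_{\sigma(1)},\dots,x_{\sigma(j)}),x_{\sigma(j+1)},\dots,x_{\sigma(n)}\bigr) = 0.
\]
Substituting $m_i = h^{i-1}\tilde m_i$ and $m_j = h^{j-1}\tilde m_j$, each summand acquires an overall factor $h^{i+j-2} = h^{n-1}$, which is common to the whole identity. Since $h$ is a non-zero divisor in $\Pi A[[h]]$, we may divide the full identity by $h^{n-1}$ to obtain precisely the $n$-th $L_\infty$ relation for $(\tilde m_i)$. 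Thus $(\tilde m_i)$ satisfies all the $L_\infty$ relations.

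There is no real obstacle here; the only thing that requires care is the bookkeeping in the first step, confirming that the order condition on $D_k$ translates, term by term, into the divisibility $h^{n-1}\mid m_n$ after multilinear extension. Once this divisibility is established, the second step is essentially automatic from the homogeneity of the $L_\infty$ relations in the degree-weighting $\deg(m_i) = i-1$.
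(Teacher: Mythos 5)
Your proof is correct. The first half --- extracting the divisibility $h^{n-1}\mid m_n$ from the fact that $D_k$ has order at most $k+1$, so that the $n$-fold iterated commutator of $D_k$ applied to $1$ vanishes for $k<n-1$ --- is exactly the argument in the paper. Where you diverge is in the second half: you verify the $L_\infty$ relations directly, observing that each relation is a sum of quadratic composites $m_i\circ_1 m_j$ with $i+j=n+1$, so that substituting $m_i=h^{i-1}\tilde m_i$ produces a uniform factor $h^{n-1}$ which can be cancelled because $h$ is a non-zero-divisor on $A[[h]]$. The paper instead packages this as a one-line conceptual statement: the rescaled structure is the conjugate of $m=m_1^*+m_2^*+\dots$ by the automorphism of $\widehat{S}A((h))^*$ induced by $a\mapsto ha$, hence automatically squares to zero over $A((h))$, and the divisibility step then guarantees it restricts to $A[[h]]$. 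These are two faces of the same weight-homogeneity of the $L_\infty$ relations; your version requires writing down (or at least trusting) the explicit quadratic form of the relations and tracking that $\ground[[h]]$-multilinearity lets you pull $h^{j-1}$ out of the inner slot, while the paper's version avoids all sign and unshuffle bookkeeping at the cost of briefly passing to the localisation $A((h))$. Both are complete proofs; yours is more elementary and self-contained, the paper's is shorter and makes transparent the fact (used later in \autoref{rem:genericfibre}) that the two structures become isomorphic after inverting $h$.
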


\begin{proof}
We first need to check that each of the maps $m_n/h^n$ do indeed give maps $A[[h]]\to A[[h]]$ (as opposed to just maps $A((h))\to A((h))$) and secondly that they do indeed give an $L_\infty$~structure.

Since each $D_i$ is an $(i+1)$--th order differential operator the operation $m_n$ is given by a formula involving only the operators $h^{i-1}D_{i-1}$ for $i \geq n$. But this means that the expression for $m_n$ has a factor of $h^{n-1}$ and so $m_n/h^{n-1}$ is a map $A[[h]]\to A[[h]]$.

To see that this does indeed define an $L_\infty$~structure observe that it is obtained by conjugating the derivation $m= m_1^* + m_2^* + \dots$ with the automorphism $\widehat{S}A((h))^* \to \widehat{S}A((h))^*$ given by setting $a \mapsto h a$ on $A$, extending $h$--linearly and then extending to an automorphism of $\widehat{S}A((h))^*$.
\end{proof}

\begin{remark}\label{rem:genericfibre}
The two $L_\infty$~structures on $\Pi A[[h]]$ in \autoref{prop:linftyrescale} of course give rise to $L_\infty$~structures on $\Pi A((h))$ and the proof of \autoref{prop:linftyrescale} shows that these $L_\infty$~structures are isomorphic. However, it does not follow that this is necessarily the case for the two $L_\infty$~structures on $\Pi A[[h]]$. More precisely, we can view these $L_\infty$~structures as formal deformations of two different $L_\infty$~structures on $\Pi A$. The first deformation is always trivial, by \autoref{thm:homotopyabelian}, whereas the second is trivial, as we shall see, in the presence of the degeneration condition.
\end{remark}

\begin{definition}\label{def:linfbv}
Let $A$ be a $\bvinf$~algebra. We denote by $\mathfrak{g}[[h]] = \Pi A[[h]]$ the $L_\infty$~algebra with $L_\infty$ structure given by $m_1 = D, m_2/h, m_3/h^2, \dots$. We denote by just $\mathfrak{g} = \Pi A$ the $L_\infty$~algebra obtained by setting $h=0$. In particular the differential is then just $m_1 = D_0$.
\end{definition}

The $L_\infty$~structure on $\mathfrak{g} = \Pi A$ is given explicitly by the following formulae for the maps $m_n\co A\to A$.
\[
m_n(a_1,\dots, a_n) = [[[\dots [D_{n-1},a_1]\dots],a_{n-1}],a_n](1)
\]

\begin{remark}
The above explicit formulae imply that the $L_\infty$~operations $m_n$ are derivations in each variable (or multiderivations).
\end{remark}

The following theorem, which is the central result of this section, will be our main tool. It says that $L_\infty$~algebras arising in this way from $\bvinf$~algebras with the degeneration property are homotopy abelian. This is an $L_\infty$~generalisation of the theorem proved in \cite{katzarkovkontsevichpantev2008:hodgetheoretic,terilla2008:smoothnessbv}, which says that dg Lie algebras arising from dg BV algebras with the degeneration property are homotopy abelian.

\begin{theorem}
Let $A$ be a $\bvinf$~algebra with the degeneration property. Then the associated $L_\infty$~algebra $\mathfrak{g}=\Pi A$ is homotopy abelian.
\end{theorem}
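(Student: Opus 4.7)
The plan is to lift the problem from $\mathfrak{g}$ over $\ground$ to $\mathfrak{g}[[h]]$ over $\ground[[h]]$, prove that $\mathfrak{g}[[h]]$ is homotopy abelian by descent from $\ground((h))$, and then reduce modulo $h$. The three key inputs will be \autoref{thm:homotopyabelian} applied to $(A[[h]], D)$, the change-of-variables automorphism exhibited in the proof of \autoref{prop:linftyrescale}, and the degeneration property as the descent mechanism.

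First I would apply \autoref{thm:homotopyabelian} with base ring $\ground[[h]]$ to the commutative algebra $A[[h]]$ equipped with the odd operator $D = D_0 + hD_1 + h^2 D_2 + \dots$, which satisfies $D^2=0$ and $D(1)=0$ by the $\bvinf$ axioms. This gives that $\Pi A[[h]]$ with its \emph{original} $L_\infty$ structure $(m_1, m_2, m_3, \dots)$ is homotopy abelian over $\ground[[h]]$, and in particular over $\ground((h))$ after inverting $h$. The change-of-variables automorphism in the proof of \autoref{prop:linftyrescale} identifies the original and rescaled structures over $\ground((h))$, so $\mathfrak{g}((h))$ is homotopy abelian as well.

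Second, let $H := H^*(A[[h]], D)$. The degeneration property says exactly that $H$ is free over $\ground[[h]]$ and that $H/hH \cong H^*(A, D_0)$; equivalently, it supplies a Hodge-type decomposition of $A[[h]]$ over $\ground[[h]]$ compatible with $D$. Homotopy transfer of $\mathfrak{g}[[h]]$ along this decomposition yields a minimal model $(H, 0, \mu_2, \mu_3, \dots)$ over $\ground[[h]]$ quasi-isomorphic to $\mathfrak{g}[[h]]$. Tensoring with $\ground((h))$ produces a minimal model of $\mathfrak{g}((h))$, which must be abelian by the previous step, forcing $\mu_n \otimes_{\ground[[h]]} \ground((h)) = 0$ for every $n \geq 2$. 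Since $H$ is free over $\ground[[h]]$, the map $H \hookrightarrow H \otimes_{\ground[[h]]} \ground((h))$ is injective, so $\mu_n = 0$ already over $\ground[[h]]$ and $\mathfrak{g}[[h]]$ is homotopy abelian over $\ground[[h]]$.

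Finally, reducing the quasi-isomorphism $\mathfrak{g}[[h]] \to (H, 0, 0)$ modulo $h$ gives an $L_\infty$ map $\mathfrak{g} \to (H/hH, 0, 0) \cong (H^*(A, D_0), 0, 0)$; because everything in sight is free over $\ground[[h]]$, this reduction is well-defined, and degeneration identifies the induced map on cohomology with the canonical isomorphism, so it is a quasi-isomorphism and $\mathfrak{g}$ is homotopy abelian. The main obstacle I anticipate is the descent step: making rigorous the homotopy-transfer construction of a minimal model over $\ground[[h]]$ with free underlying module $H$, compatibly with the reduction $h \mapsto 0$. Both points are exactly what the degeneration property furnishes.
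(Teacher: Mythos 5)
Your proposal is correct and follows essentially the same route as the paper: establish that $\mathfrak{g}((h))$ is homotopy abelian via \autoref{thm:homotopyabelian} and the rescaling isomorphism of \autoref{prop:linftyrescale}, use the degeneration property to obtain a $\ground[[h]]$--linear minimal model whose operations must vanish after inverting $h$ (hence vanish outright by freeness), and then set $h=0$. The only difference is that you spell out the injectivity-from-freeness step and the compatibility of reduction mod $h$ slightly more explicitly than the paper does.
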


\begin{proof}
First note that for any $\bvinf$~algebra it is the case that $\mathfrak{g}((h)) = \mathfrak{g}[[h]][h^{-1}]$ is homotopy abelian, since by \autoref{rem:genericfibre} $\mathfrak{g}((h))$ is isomorphic to the $L_\infty$~structure on $\Pi A((h))$ associated to the operator $D$, regarded as an operator on $A((h))$. But by \autoref{thm:homotopyabelian} this is homotopy abelian.

The degeneration property means that there exists a $\ground[[h]]$--linear deformation retract of the chain complex $\mathfrak{g}[[h]]$ onto its homology. Therefore the $L_\infty$~operations of the minimal model of $\mathfrak{g}[[h]]$ are also $\ground[[h]]$--linear and tensoring with $\ground((h))$ over $\ground[[h]]$ gives the $L_\infty$~minimal model of $\mathfrak{g}((h))$. However, since $\mathfrak{g}((h))$ is homotopy abelian, all the $L_\infty$~operations of this minimal model are zero, and so the same is true for the operations of the minimal model of $\mathfrak{g}[[h]]$ and hence, setting $h=0$, also for the operations of the minimal model of $\mathfrak{g}$. Therefore $\mathfrak{g}$ is homotopy abelian.
\end{proof}

\section{\texorpdfstring{$\bvinf$}{BV-infinity}~structure on the de Rham algebra}
Let $M$ be a (super)manifold of dimension $d$ with space of functions $C^\infty(M)$. Denote by ${\mathfrak A}(M)$ the space of global sections of the super vector bundle $\bigwedge^\bullet TM$, in other words ${\mathfrak A}(M) = \bigwedge^\bullet_{C^\infty(M)}\Der(C^{\infty}(M))$. This is the space of multivector fields on $M$.

Recall that $\mathfrak{A}(M)$ has the structure of a Gerstenhaber algebra, with the commutative product given by the wedge product and the antibracket given by the Schouten bracket, which is the unique way of extending the Lie derivative to make $\mathfrak{A}(M)$ into a Gerstenhaber algebra. More precisely in terms of the Lie bracket on vector fields we have
\[
[ v_1\wedge\dots \wedge v_n, w_1\wedge\dots \wedge v_m ] = \sum_{i,j}(-1)^{i+j}[v_i,w_j]\wedge v_1\wedge\dots\wedge \hat v_i \wedge \dots\wedge v_n \wedge w_1 \wedge \dots \wedge \hat w_j \wedge \dots \wedge v_m
\]
for vector fields $v_i$, $w_j$ and for a function $f$ and vector field $v$
\[
[v,f] = v(f).
\]
Recall that a Poisson structure on $M$ is a bivector field on $M$ whose Schouten square is zero. Considering general multivector fields, we obtain the notion of a generalised (or higher) Poisson structure, cf.~\cite{voronovkhudaverdian2008:higerpoissonbrackets}.

\begin{definition}
A generalised Poisson structure on $M$ is an even element $P\in\mathfrak A(M)$ for which $[P,P]=0$.
\end{definition}

Any generalised Poisson structure $P$ can be written as $P=P_{-1}+P_0+\dots$ where $P_i\in\bigwedge^{i+1}_{C^\infty(M)}\Der(C^{\infty}(M))$. For simplicity (i.e.~in order to exclude considering curved $L_\infty$~algebras) we make the blanket assumption that $P_{-1}=0$. Furthermore, if $M$ is a purely even manifold of dimension $d$, then $P_0$ must likewise be zero and $P=P_1+\dots+P_d$.

For a generalised Poisson structure $P=\sum_nP_n$ as above let $P(h)\in \mathfrak{A}(M)[[h]]$ be defined by the formula:
\[P(h)=\sum_nP_nh^n\]
Then clearly we have $[P(h),P(h)]=0$ in $\mathfrak{A}(M)[[h]]$.

Note that the $n$--vector field $P_n$ acts by the Lie derivative on the de Rham algebra $\Omega(M)$. Recall that the operator of the Lie derivative along a multivector field $Q$ is defined as $L_Q=[i_Q,d]$ where $i_Q$ is the operation of the interior derivative and $d$ is the de Rham differential. Since for two multivector fields $Q_1$ and $Q_2$ we have $i_{Q_1\wedge Q_2}=\pm i_{Q_1}\circ i_{Q_2}$ and since $d$ is a derivation of $\Omega(M)$, we conclude that $L_{P_n}$ is a differential operator of order $n$ on $\Omega(M)$. Let $L_{P}(h)=L_{P(h)}$; it is an operator on $\Omega(M)[[h]]$. The identity $[P(h),P(h)]=0$ implies $L_P(h)\circ L_P(h)=0$.

\begin{definition}
The sequence of operators $L_{P_n}$ on $\Omega(M)$ determines the structure of a $\bvinf$~algebra structure on $\Omega(M)$, which will be referred to as the de Rham--Koszul $\bvinf$~algebra of $M$. The $L_\infty$~algebra on $\Omega(M)$ associated to this $\bvinf$~structure according to \autoref{def:linfbv} will be called the de Rham--Koszul $L_\infty$~algebra of $M$.
\end{definition}

\begin{remark}
The de Rham--Koszul $L_\infty$~structure on $\Omega(M)$ defined above was introduced in \cite{voronovkhudaverdian2008:higerpoissonbrackets} and the corresponding $L_\infty$~operations were called `higher Koszul brackets' there. Recall that $\Omega(M)[[h]]$ possesses another $L_\infty$~structure, which is a trivial deformation of the homotopy abelian structure on $\Omega(M)$, cf.~\autoref{rem:genericfibre}. The latter structure was  considered in \cite{bruce:higherpoission}.
\end{remark}

\begin{theorem}\label{thm:generalisedPoisson}
Let $M$ be a manifold with a generalised Poisson structure; then the $\bvinf$~algebra $\Omega(M)$ satisfies the degeneration property.
\end{theorem}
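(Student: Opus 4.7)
The plan is to produce an explicit $\ground[[h]]$-linear chain isomorphism intertwining the total $\bvinf$ differential $D = d + L_{P(h)}$ on $\Omega(M)[[h]]$ with the plain de~Rham differential $d$. Since such an isomorphism will descend modulo $h^N$ for every $N$, and the $d$-cohomology of $\Omega(M)[h]/(h^N)$ is manifestly the free $\ground[h]/(h^N)$-module $H^*_{dR}(M)\otimes \ground[h]/(h^N)$, the degeneration property will follow immediately.

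First I would introduce the contraction operator $i_{P(h)} = \sum_n h^n i_{P_n}$, regarded as a $\ground[[h]]$-linear operator on $\Omega(M)[[h]]$. Because each $P_n$ is a multivector of rank at least two, $i_{P(h)}$ lowers form-degree by at least two, so $i_{P(h)}^k$ vanishes identically as soon as $k$ exceeds $\dim M/2$. Consequently $e^{\pm i_{P(h)}}$ are well-defined as polynomial operators on $\Omega(M)[[h]]$ and are mutually inverse.

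The heart of the matter is the identity
\[
D \;=\; e^{i_{P(h)}}\, d\, e^{-i_{P(h)}},
\]
which I would verify by expanding the right-hand side as $\sum_{k\geq 0}\tfrac{1}{k!}\ad(i_{P(h)})^k(d)$ and appealing to Cartan calculus on $\Omega(M)$. Cartan's magic formula gives $\ad(i_{P(h)})(d) = [i_{P(h)},d] = L_{P(h)}$ up to a sign. For the next term, the standard relation $[L_Q,i_R] = \pm i_{[Q,R]}$ together with the Maurer--Cartan hypothesis $[P(h),P(h)] = 0$ forces $\ad(i_{P(h)})^2(d) = [i_{P(h)},L_{P(h)}] = \pm i_{[P(h),P(h)]} = 0$. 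Since every higher iterated bracket $\ad(i_{P(h)})^k(d)$ for $k\geq 2$ is just $\ad(i_{P(h)})^{k-2}\bigl([i_{P(h)},L_{P(h)}]\bigr)$, these all vanish too, and the exponential series terminates at the first-order term, yielding exactly $d + L_{P(h)} = D$.

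Granted this identity, $e^{-i_{P(h)}}$ is the required invertible $\ground[[h]]$-linear chain map from $(\Omega(M)[[h]],d)$ to $(\Omega(M)[[h]],D)$; reducing modulo $h^N$ trivialises the spectral sequence associated to the filtration by powers of $h$, which is the degeneration property. The main technical obstacle will be the careful sign bookkeeping in Cartan calculus in the supergraded setting---in particular, confirming that $[i_{P(h)},L_{P(h)}]$ really is (up to an unambiguous sign) $i_{[P(h),P(h)]}$, since it is this single fact that makes the adjoint expansion collapse after its first two terms.
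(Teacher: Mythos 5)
Your proof is correct, but it is a genuinely different argument from the one in the paper. The paper proceeds sheaf-theoretically: it observes that the operators $L_{P_i}$ are local, so $(\Omega[h]/h^N,\, d+hL_{P_1}+\dots)$ is a dg sheaf; on a contractible open set the filtration by powers of $h$ together with the Poincar\'e lemma shows this complex is quasi-isomorphic to $\mathbb{R}[h]/h^N$, whence the dg sheaf is quasi-isomorphic to the constant sheaf and (by fineness of $\Omega$) the homology is the hypercohomology $H(M)[h]/h^N$, which is visibly free. You instead exhibit a global gauge transformation $e^{i_{P(h)}}$ conjugating $d$ into $D=d+L_{P(h)}$, using $\ad(i_{P(h)})(d)=L_{P(h)}$ and $\ad(i_{P(h)})^2(d)=\pm\, i_{[P(h),P(h)]}=0$; this shows directly that $D$ is a \emph{trivial} formal deformation of $d$, which is one of the equivalent formulations of the degeneration property recorded in the paper's remarks. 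Your route is more explicit and in a sense stronger --- it produces concrete degeneration data and needs no sheaf theory --- at the cost of relying on the Koszul--Cartan identity $[L_Q,i_R]=\pm\, i_{[Q,R]}$ and its compatibility with the paper's sign conventions for the Schouten bracket, which, as you say, is the one point requiring care (it does hold, and is essentially the same calculus the paper already invokes to get $L_{P(h)}^2=0$ from $[P(h),P(h)]=0$). Two small points: the nilpotence argument via form degree is unnecessary (and would need modification for supermanifolds), since $i_{P(h)}\in h\End(\Omega(M))[[h]]$ already makes $e^{\pm i_{P(h)}}$ converge $h$-adically and reduce to the identity mod $h$; and the chain isomorphism from $(\Omega(M)[[h]],d)$ to $(\Omega(M)[[h]],D)$ is $e^{i_{P(h)}}$ rather than $e^{-i_{P(h)}}$, since $D\circ e^{i_{P(h)}}=e^{i_{P(h)}}\circ d$.
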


\begin{proof}
For simplicity we assume that $M$ is a purely even manifold, although the arguments carry over with obvious modifications to the supergraded case. Let $\Omega$ be the graded sheaf of differential forms on $M$; thus for an open set $U\in M$ the group $\Omega(U)$ is the de Rham algebra on $U$.  For a generalised Poisson structure $P= P_1 + P_2 + \dots$ on $M$ the operators $L_{P_i}$ are determined locally and can be viewed as endomorphisms of the sheaf $\Omega$. Adjoining the formal variable $h$ we can, therefore, consider a dg sheaf $\Omega[[h]]$ with the differential $d + h P_1+\dots$ as well as its truncated version $\Omega[h]/h^N$ for $N=1,2,\dots$; here $d$ is the de Rham differential. Since the sheaf $\Omega$ is fine, the degeneration property for the $\bvinf$~algebra $\Omega(M)$ is equivalent to the statement that the hypercohomology of $\Omega[h]/h^N$ is a free ${\mathbb R}[h]/h^N$--module for any $N=1,2,\dots$.

Let $U$ be a contractible open set in $M$. Then, filtering the dg space $(\Omega(U)[h]/h^N, d+hL_{P_1}+\dots+h^{N-1}L_{P_{N-1}})$ by the powers of $h$ and using the fact that the de Rham differential $d$ is acyclic, we conclude that it is quasi-isomorphic to ${\mathbb R}[h]/h^N$ concentrated in degree zero. It follows that the dg sheaf $\Omega$ is quasi-isomorphic to the constant sheaf ${\mathbb R}[h]/h^N$. Therefore its hypercohomology is isomorphic to $H(M)[h]/h^N$ and is free over ${\mathbb R}[h]/h^N$.
\end{proof}

In the case of an ordinary Poisson structure a statement equivalent to \autoref{thm:generalisedPoisson} was proved in~\cite{dotsenkoshadrinvallette:derhamhomotopyfrob}. The following corollary generalises the corresponding results of \cite{sharygintalalaev2008:formalitypoisson} and \cite{fiorenzamanetti2012:koszulbrackets} formulated for ordinary Poisson manifolds.

\begin{corollary}
The de Rham--Koszul $L_\infty$~algebra of $M$ is homotopy abelian. In particular, the higher Koszul brackets on $H(M)$ vanish.\qed
\end{corollary}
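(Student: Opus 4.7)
The plan is to combine the two results immediately preceding this corollary, both of which have already been proved in the paper. First, I would cite \autoref{thm:generalisedPoisson} to conclude that the de Rham--Koszul $\bvinf$~algebra on $\Omega(M)$ has the degeneration property. Then I would invoke the main theorem of the previous section, which asserts that the $L_\infty$~algebra $\mathfrak{g} = \Pi A$ associated via \autoref{def:linfbv} to any $\bvinf$~algebra $A$ with the degeneration property is homotopy abelian. Since the de Rham--Koszul $L_\infty$~algebra is by definition $\mathfrak{g} = \Pi\Omega(M)$, the first statement follows immediately.

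For the second statement I would argue as follows. An $L_\infty$~algebra is homotopy abelian precisely when its minimal model carries only the zero $L_\infty$~structure. The linear part of the $L_\infty$~structure on $\mathfrak{g}$ is $m_1 = D_0 = d$, the de Rham differential (recall that under the blanket assumption $P_{-1}=P_0=0$ we have $D_0 = d$ and $D_i = L_{P_i}$ for $i\geq 1$). Hence the minimal model of $\mathfrak{g}$ has underlying graded space $\Pi H(M)$, and by homotopy abelianness its transferred $L_\infty$~operations must all vanish. These transferred operations on $\Pi H(M)$ are precisely the higher Koszul brackets on $H(M)$ in the sense of~\cite{voronovkhudaverdian2008:higerpoissonbrackets}, so the brackets all vanish.

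I do not anticipate any substantive obstacle, as the corollary is essentially formal once the two preceding theorems are in hand. The only point requiring a moment's care is the identification of the minimal-model $L_\infty$~operations on $\Pi H(M)$ with the higher Koszul brackets, but this is a direct unpacking of definitions, using the explicit formula $m_n(a_1,\dots,a_n) = [[[\dots[D_{n-1},a_1]\dots],a_{n-1}],a_n](1)$ recorded after \autoref{def:linfbv} together with standard homotopy transfer, rather than a substantive argument.
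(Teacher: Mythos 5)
Your proposal is correct and is precisely the paper's intended argument: the corollary is stated with a \qed because it follows immediately by combining \autoref{thm:generalisedPoisson} with the theorem that a $\bvinf$~algebra satisfying the degeneration property has homotopy abelian associated $L_\infty$~algebra, and the vanishing of the higher Koszul brackets is the standard reformulation of homotopy abelianness via the minimal model on $\Pi H(M)$. No further comment is needed.
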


Note that $\Omega(M)$ can be viewed as a double complex with $\Omega^{p,q}=\Omega^{p-q}$ and with two commuting differentials $d$ and $L_P$. Then we have the following corollary which was obtained in the case of ordinary Poisson manifold in \cite{fernandezetal1998:canonicalspectralsequence}.

\begin{corollary}
The spectral sequence of the double complex $\Omega(M)$ collapses at the $E_1$ term $E_1=H(\Omega(M),d)$.\qed
\end{corollary}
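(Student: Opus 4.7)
The plan is to deduce the corollary as a direct reformulation of Theorem~\ref{thm:generalisedPoisson}, using the equivalence recorded in the remark following the definition of the degeneration property: degeneration of a $\bvinf$~algebra $A$ is by definition equivalent to collapse at $E_1$ of the spectral sequence associated to the filtration of $A[[h]]$ (with differential $D = D_0 + hD_1 + \cdots$) by powers of $h$.

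First I would specialise that remark to the de Rham--Koszul $\bvinf$~algebra, so $A = \Omega(M)$ with $D = d + hL_{P_1} + h^2L_{P_2} + \cdots$. The $E_0$-page is $\Omega(M)[[h]]$ with differential $D_0 = d$, and hence $E_1 = H(\Omega(M),d)[[h]]$. Theorem~\ref{thm:generalisedPoisson} guarantees that $\Omega(M)$ satisfies the degeneration property, so this spectral sequence collapses at $E_1$.

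Second I would identify this $h$-adic spectral sequence with the spectral sequence of the double complex $\Omega^{p,q} = \Omega^{p-q}$ with commuting differentials $d$ and $L_P$, filtered by $q$ so that the $E_0$-differential is $d$. In the ordinary Poisson case this is literally a relabelling under the correspondence $h \leftrightarrow q$. In the generalised Poisson case the same comparison works because the weighting of $L_{P_n}$ by $h^n$ inside $D$ encodes precisely the shift in the $q$-grading induced by $L_{P_n}$; the $h$-adic filtration and the $q$-filtration therefore carry the same associated graded data and give rise to the same spectral sequence.

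The main, and essentially the only, point requiring thought is this identification of filtrations in the generalised Poisson setting, where $L_P = \sum_n L_{P_n}$ does not have a fixed bidegree and the term ``double complex'' has to be read as shorthand for the appropriate filtered complex. Once this bookkeeping is in place, the collapse at $E_1 = H(\Omega(M),d)$ follows immediately from Theorem~\ref{thm:generalisedPoisson}.
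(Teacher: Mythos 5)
Your proposal is correct and is essentially the paper's intended argument: the corollary is stated without proof precisely because it follows immediately from \autoref{thm:generalisedPoisson} together with the remark identifying the degeneration property with $E_1$--collapse of the spectral sequence of the filtration of $\Omega(M)[[h]]$ by powers of $h$, which is the column filtration of the (multi)complex $\Omega^{p,q}=\Omega^{p-q}$. Your additional care about the fact that in the generalised case $L_{P_n}$ shifts the $q$--grading by $n$, so that ``double complex'' really means the corresponding filtered multicomplex, is a reasonable and correct reading of the paper's bookkeeping.
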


\bibliography{references}
\bibliographystyle{alphaurl}

\end{document}